\documentclass[11pt]{amsart}
\usepackage{amsmath}
\usepackage{amscd}
\usepackage{amssymb}
\usepackage{amsfonts}
\newtheorem{theorem}{Theorem}[section]
\newtheorem{lemma}[theorem]{Lemma}
\newtheorem{corollary}[theorem]{Corollary}
\newtheorem{proposition}[theorem]{Proposition}

\theoremstyle{definition}

\theoremstyle{remark}
\newtheorem{remark}[theorem]{Remark}
\numberwithin{equation}{section}

\begin{document}
\title[Time analyticity for heat equation on shrinkers]
{Time analyticity for heat equation on gradient shrinking Ricci solitons}

\author{Jia-Yong Wu}
\address{Department of Mathematics, Shanghai University, Shanghai 200444, China}
\email{jywu81@yahoo.com}

\date{\today}
\subjclass[2010]{Primary 53C21; Secondary 35C10, 35K05}
\keywords{gradient shrinking Ricci soliton; heat equation; time analyticity}
\thanks{}

\begin{abstract}
On a complete non-compact gradient shrinking Ricci soliton, we prove the analyticity
in time for smooth solutions of the heat equation with quadratic exponential growth in
the space variable. This growth condition is sharp. As an application, we give a necessary
and sufficient condition on the solvability of the backward heat equation in a class of
functions with quadratic exponential growth on shrinkers.
\end{abstract}
\maketitle

\section{Introduction}\label{Int1}
As is well known, generic solutions of the heat equation are usually analytic in
space but not analytic in time. In Euclidean space, it is not difficult to
construct non-time-analytic solutions of the heat equation in a finite space-time
cylinder. It is therefore an interesting task to seek suitable conditions for ensuring
the time analyticity of solutions of the heat equation (see \cite{[Wi]}). In a
recent paper \cite{[Zh]}, Zhang proved that the ancient solutions with exponential
growth in the space variable are time-analytic on a complete non-compact Riemannian
manifold with the Ricci curvature bounded below. He also gave a necessary and
sufficient condition on the solvability of the backward heat equation in a class of
functions with exponential growth. Later, by choosing suitable space-time cutoff functions,
Dong and Zhang \cite{[DZ]} extended Zhang's results to the solutions with quadratic exponential
growth in the space variable. Meanwhile, they provided an example to indicate that the
growth condition is sharp. For more results about time analyticity for parabolic equations,
see \cite{[Ma]}, \cite{[KN]}, \cite{[Ko]}, \cite{[Gi]}, \cite{[EMZ]}, \cite{[HHW]} and
references therein.

In this paper, we will study the time analyticity for smooth solutions of the heat equation
on a complete non-compact gradient shrinking Ricci soliton (see the definition below).
We will prove that the analyticity in time always holds on a fixed gradient shrinking Ricci soliton
without any curvature assumption, provided that the solutions have quadratic exponential growth
in the space variable. This result may be useful for understanding the function theory
of gradient shrinking Ricci solitons.

Recall that an $n$-dimensional Riemannian manifold $(M, g)$ is called \emph{a gradient
shrinking Ricci soliton} $(M, g, f)$ (also called \emph{a shrinker} for short)
(see \cite{[Ham]}) if there exists a smooth potential function $f$ on $M$ such that
\begin{align}\label{Eq1}
\mathrm{Ric}+\mathrm{Hess}\,f=\frac 12g,
\end{align}
where $\text{Ric}$ is the Ricci curvature of $(M,g)$ and $\text{Hess}\,f$ is the Hessian
of $f$. Obviously, the flat Euclidean space $(\mathbb{R}^n, \delta_{ij})$ is a gradient
shrinking Ricci soliton with potential function $f=|x|^2/4$, which is called the Gaussian
shrinking Ricci soliton $(\mathbb{R}^n,\delta_{ij}, |x|^2/4)$. Shrinkers play an
important role in the Ricci flow \cite{[Ham]} and in Perelman's resolutions of the
Poincar\'e Conjecture \cite{[Pe],[Pe2],[Pe3]}, as they are self-similar solutions
and arise as limits of dilations of Type I singularities in the Ricci flow. Over
the past two decades, the geometric and analytic properties of shrinkers have
become an active issue, as this is useful for understanding the structure of manifolds
(see \cite{[Cao]} for an excellent survey).

From \eqref{Eq1}, Hamilton \cite{[Ham]} observed that
\[
C(g):=\mathrm{R}+|\nabla f|^2-f
\]
is a finite constant, where $\mathrm{R}$ is the scalar curvature of $(M,g)$.
Adding $f$ by a constant, without loss of generality, we assume that
\begin{equation}\label{Eq2}
\mathrm{R}+|\nabla f|^2=f.
\end{equation}
Under this normalization, it is not hard to see that (see also \cite{[CaNi]})
\begin{equation}\label{Eq3}
\int_M (4\pi)^{-\frac n2}e^{-f} dv=e^{\mu},
\end{equation}
where $dv$ is the Riemannian volume element on $(M,g)$ and $\mu=\mu(g,1)$ is the
entropy functional of Perelman \cite{[Pe]}. For the Ricci flow, Perelman's
entropy functional is time-dependent, but for a fixed gradient shrinking Ricci
soliton it is constant and finite.

Now we give the analyticity in time for smooth solutions of the heat equation on a complete
non-compact gradient shrinking Ricci soliton.
\begin{theorem}\label{main}
Let $(M,g, f)$ be an $n$-dimensional complete non-compact gradient shrinking Ricci soliton
satisfying \eqref{Eq1}, \eqref{Eq2} and \eqref{Eq3}. Let $u(x,t)$ be a smooth solution of
the heat equation $(\Delta-\partial_t)u=0$ on $M\times[-2, 0]$. For a fixed point $p\in M$,
if $u$ satisfies quadratic exponential growth in the space variable, i.e.,
\begin{equation}\label{expg}
|u(x,t)|\le A_1 e^{A_2d^2(x, p)}
\end{equation}
for all $(x,t)\in M\times[-2, 0]$, where $A_1$ and $A_2$ are some positive constants,
and $d(x,p)$ is the distance function from $p$ to $x$, then $u(x,t)$ is analytic
in time $t\in[-1,0]$ with radius $\delta>0$  depending only on $n$, $A_2$, $\mu$
and $f$. Moreover, we have that
\[
u(x,t)=\sum^\infty_{j=0}a_j(x)\frac{t^j}{j!},
\]
with $\Delta a_j(x)=a_{j+1}(x)$ and
\[
|a_j(x)|\le A_1e^{-\mu/2}e^{f(x)/2}(f(x)+1)^{n/4}
A_3^{j+1}j^j e^{2A_2 d^2(x,p)},\,\,\,j=0,1,2,\ldots,
\]
where $A_3$ is some constant depending only on $n$ and $A_2$,
and $0^0=1$. Here $\mu=\mu(g,1)$ denotes Perelman's
entropy functional.
\end{theorem}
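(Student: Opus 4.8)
The plan is to reduce the analyticity statement to a pointwise bound on the time-derivatives of $u$, and then to prove that bound by an iterated weighted parabolic smoothing estimate adapted to the soliton structure. Set $u_j(x,t):=\partial_t^j u(x,t)$. Since $u$ is smooth and $\Delta$ has time-independent coefficients, differentiating the equation in $t$ gives $u_j=\Delta^j u$ and $\partial_t u_j=\Delta u_j$, so each $u_j$ again solves the heat equation and $a_j(x):=u_j(x,0)$ satisfies $\Delta a_j=a_{j+1}$. Taylor's formula with integral remainder gives, for $t\in[-1,0]$,
\[
u(x,t)=\sum_{j=0}^{N}\frac{a_j(x)}{j!}\,t^{j}+\frac{t^{N+1}}{N!}\int_0^1(1-s)^{N}\,u_{N+1}(x,st)\,ds ,
\]
in which $st\in[-1,0]$ throughout, so it suffices to prove the asserted bound on $|u_j(x,t)|$ \emph{uniformly for $t\in[-1,0]$}: then $j^{j}/j!\le e^{j}$ makes the series converge and the remainder vanish for $|t|<\delta:=1/(eA_3)$, which yields analyticity with the stated radius together with the identities for $a_j$.

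For the bound I would conjugate by the potential, putting $w:=e^{f/2}u$. Using $\Delta f=\tfrac n2-\mathrm R$ and $|\nabla f|^2=f-\mathrm R$ (from \eqref{Eq1}, \eqref{Eq2}), a direct computation gives
\[
\partial_t w=\Delta_f w+Vw,\qquad \Delta_f:=\Delta-\nabla f\cdot\nabla,\qquad V:=\tfrac14(\mathrm R+f-n),
\]
where $\Delta_f$ is self-adjoint for the finite measure $d\mu:=(4\pi)^{-n/2}e^{-f}dv$, of mass $e^{\mu}$ by \eqref{Eq3}. The point of this move is that \eqref{Eq1} says the Bakry--\'Emery tensor of $(M,g,d\mu)$ equals $\tfrac12 g$, a uniform positive lower bound \emph{with no assumption on the curvature of $g$}, so one may use the functional inequalities and heat-kernel bounds for $\Delta_f$ that hold on a shrinker with constants governed only by $n$ and the entropy $\mu$; moreover $V\ge-n/4$ (since $\mathrm R\ge0$, hence $f=\mathrm R+|\nabla f|^2\ge0$), and on any ball of bounded radius about $x_0$ one has $\sup V\lesssim f(x_0)+C(n)$ by the two-sided estimate $\tfrac14(d(x,p)-c)^2\le f(x)\le\tfrac14(d(x,p)+c)^2$. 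Since $\partial_t$ commutes with $\Delta_f+V$, every $w_j:=\partial_t^j w=e^{f/2}u_j$ solves the same equation.

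The core is a one-step local smoothing estimate: if $\partial_t v=\Delta_f v+Vv$ on $B_{c\sqrt\sigma}(x_0)\times[t_0-\sigma,t_0]$, then $|\Delta_f v(x_0,t_0)|\lesssim\sigma^{-1}$ times a $d\mu$-average of $|v|$ over that cylinder, with constant depending only on $n$, $\mu$, and $\sigma\sup V$; I would derive it from a weighted $L^2$ energy identity (integration by parts against $d\mu$ introduces no first-order error, the drift being absorbed into the measure) upgraded to a pointwise bound by the shrinker mean-value/Sobolev inequality — this is where the factors $e^{-\mu/2}$ and $(f(x)+1)^{n/4}$, the latter from a weighted-volume lower bound $\mu(B_1(x_0))\gtrsim e^{-f(x_0)}(f(x_0)+1)^{-n/2}$, enter. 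Applying this step $j$ times, with the slab $[t_0-1,t_0]\subset[-2,0]$ cut into $j$ pieces of length $1/j$ and the spatial ball enlarged by $\sim 1/\sqrt j$ per step to a final radius $\sim\sqrt j$, converts $j$ time-derivatives of $w$ at $(x_0,t_0)$ into $(Cj)^{j}$ times (a factor accounting for $\sup V$ over the $j$ cylinders) times $\sup\{|w(x,s)|:x\in B_{C\sqrt j}(x_0),\ s\in[t_0-1,t_0]\}$. Now $|w|=e^{f/2}|u|\le A_1 e^{f/2+A_2 d^2(\cdot,p)}$; estimating the relevant suprema over $B_{C\sqrt j}(x_0)$ by the two-sided bound on $f$ turns $A_2 d^2$ into $2A_2 d^2(x_0,p)$ and $f$ into $f(x_0)$ up to $O(j)$ errors, and dividing by $e^{f(x_0)/2}$ to pass back to $u_j=e^{-f/2}w_j$ leaves, after all $B^{\,j}$-type constants are absorbed into $A_3^{\,j}$, precisely the claimed bound with $A_3=A_3(n,A_2)$.

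The decisive obstacle is the one-step local smoothing estimate, for two reasons. First, it must hold with constants independent of the base point $x_0$ although no lower curvature bound is available: the resolution is never to invoke the geometry of $g$ directly but only the weighted geometry $(M,g,d\mu)$ with $\mathrm{Ric}_f\ge\tfrac12 g$ and the uniform, entropy-controlled logarithmic-Sobolev (hence Gaussian heat-kernel) estimates for $\Delta_f$ on shrinkers — precisely the soliton input that makes everything uniform and that is responsible for the $\mu$- and $f$-dependent prefactor. Second, one must extract \emph{exactly} an $O(\sigma^{-1})$ dependence — not $\sigma^{-2}$ or worse — from a single step, since it is this linearity that, iterated $j$ times with $\sigma=1/j$, produces $j^{j}$ rather than $j^{2j}$, and only $j^{j}$ survives division by $j!$. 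Controlling the unbounded potential $V$ (innocuous on cylinders of bounded size, but responsible for the $f(x)$-growth of the prefactor) and verifying that the domains of dependence really stay inside $M\times[-2,0]$ over the $j$ steps are the remaining, more routine, points.
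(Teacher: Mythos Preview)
Your approach diverges from the paper's in a way that creates a real difficulty. The paper never conjugates by $e^{f/2}$ and never works with the drift operator $\Delta_f$ or a potential; it stays with the ordinary heat equation throughout. Its scheme is: (i) iterate the plain $L^2$ Caccioppoli/energy estimate $k$ times on nested parabolic cylinders of scale $1/\sqrt k$, obtaining $\int_{\Omega^1_1}(\partial_t^k u)^2\le C^k k^{2k}\int_{\Omega^1_k}u^2$ --- these steps use only integration by parts and carry \emph{no} geometric constants; (ii) apply \emph{once} the shrinker mean value inequality (Proposition~\ref{prop}), which is where the Sobolev constant $e^{-\mu}$ and the factor $(\mathrm R_M+1)^{n/2}\le(f(x_0)+1)^{n/2}$ enter, converting the $L^2$ bound to the pointwise one. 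The separation is the whole point: geometry-dependent constants appear exactly once, yielding precisely $e^{-\mu/2}(f(x)+1)^{n/4}$.

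In your scheme the geometry enters at \emph{every} step. Your one-step estimate for $\partial_t v=\Delta_f v+Vv$ must carry a constant depending on $\sigma\sup V$ (and on the weighted Sobolev/volume data needed to pass from $L^2$ to $L^\infty$); iterating $j$ times with $\sigma=1/j$ over cylinders reaching radius $\sim\sqrt j$ gives, via a Duhamel or Feynman--Kac accounting of the unbounded potential $V=\tfrac14(\mathrm R+f-n)$, a cumulative factor of order $\exp\bigl(\sum_{i=1}^j\sigma\sup_iV\bigr)\sim e^{C(f(x_0)+j)}$, and likewise each $L^2\!\to\!L^\infty$ conversion contributes its own $(f(x_0)+1)^{n/4}$ and $e^{-\mu/2}$, multiplied $j$ times. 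You would still obtain analyticity (all these losses are of the form $B^{j}$ or $e^{Cf(x_0)}=e^{Cd^2/4}$, hence absorbable into some $A_3^{\,j}e^{Cd^2}$), but you would not recover the stated prefactor $e^{-\mu/2}e^{f(x)/2}(f(x)+1)^{n/4}e^{2A_2 d^2}$ with $A_3=A_3(n,A_2)$: the exponent of $d^2$ and the $\mu$-dependence would both be degraded. The fix is exactly the paper's: keep the iteration in $L^2$, where the Caccioppoli step is geometry-free, and invoke the shrinker mean value inequality only once at the end.
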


\begin{remark}
The growth condition is necessary. As in
\cite{[DZ]}, let $v(x,t)$ be Tychonov's solution of the heat equation in
$(\mathbb{R}^n,\delta_{ij}, |x|^2/4)\times \mathbb{R}$ such that $v=0$ if
$t\le 0$ and $v$ is nontrivial for $t>0$. Then $u:=v(x, t+1)$ is a nontrivial
ancient solution of the heat equation and is not analytic in time. Note that
$|u(x,t)|$ grows faster than $e^{c |x|^2}$  for any $c>0$, but for any
$\epsilon>0$, $|u(x,t)|$ is bounded by $c_1e^{c_2|x|^{2+\epsilon}}$ for some
constants $c_1$ and $c_2$. This implies that growth condition \eqref{expg} is sharp.
\end{remark}

In general, the Cauchy problem to the backward heat equation is not solvable.
However, on a complete non-compact shrinker, we can obtain a solvable result by a simple
application of Theorem \ref{main}.
\begin{corollary}\label{cor}
Let $(M,g, f)$ be an $n$-dimensional complete non-compact gradient shrinking Ricci soliton
satisfying \eqref{Eq1}, \eqref{Eq2} and \eqref{Eq3}. For a fixed point $p\in M$, the Cauchy problem for the
backward heat equation
\begin{equation}\label{bhe}
\begin{cases}
(\Delta+\partial_t)u=0,\\
u(x,0)=a(x)
\end{cases}
\end{equation}
has a smooth  solution with quadratic exponential growth in $M\times(0,\delta)$ for some
$\delta>0$ if and only if
\begin{equation}\label{aijie}
|\Delta^j a(x)|\le e^{-\mu/2}e^{f(x)/2}(f(x)+1)^{n/4}
A_3^{j+1}j^j e^{A_4 d^2(x,p)}, \quad
j=0, 1,2,\ldots,
\end{equation}
where  $A_3$ and $A_4$ are some positive constants.
\end{corollary}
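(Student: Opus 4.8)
The plan is to prove the two implications of Corollary \ref{cor} separately. The necessity will be an immediate consequence of Theorem \ref{main} applied to the time-reversed solution, while the sufficiency will be obtained by writing down the candidate solution as a power series in $t$ and checking that \eqref{aijie} forces this series to converge, to solve \eqref{bhe}, and to have quadratic exponential growth.

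\emph{Necessity.} Suppose \eqref{bhe} has a smooth solution $u$ with $|u(x,t)|\le A_1e^{A_2d^2(x,p)}$ on $M\times(0,\delta_0)$ for some $\delta_0>0$. Put $v(x,t):=u(x,-t)$, a smooth solution of $(\Delta-\partial_t)v=0$ on $M\times(-\delta_0,0]$, again of quadratic exponential growth, with $v(x,0)=a(x)$. The proof of Theorem \ref{main} uses only that the solution is defined on a time interval of some fixed finite length, so it applies verbatim to solutions on $M\times[-T,0]$ for any $T>0$, the constants $\delta$ and $A_3$ then depending also on $T$. Applying this to $v$ on $M\times[-T,0]$ with $T<\delta_0$, we obtain $v(x,t)=\sum_{j\ge0}a_j(x)t^j/j!$ near $t=0$ with $\Delta a_j=a_{j+1}$ and $a_0=v(\cdot,0)=a$, hence $a_j=\Delta^ja$, together with the pointwise bound on $a_j$ from Theorem \ref{main}. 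Since $\mu$ is a fixed finite number and the factor $A_1$ can be absorbed into the base of the geometric factor, this is precisely \eqref{aijie} with $A_4=2A_2$.

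\emph{Sufficiency.} Assume \eqref{aijie} holds, and define
\[
u(x,t):=\sum_{j=0}^\infty(-\Delta)^ja(x)\,\frac{t^j}{j!}.
\]
From \eqref{aijie} and the elementary bound $j^j\le e^jj!$ we get, for any compact $K\subset M$ and $|t|\le\delta:=(2eA_3)^{-1}$, that $\sum_j\|(-\Delta)^ja\|_{C^0(K)}|t|^j/j!\le C_K\sum_j2^{-j}<\infty$, and similarly for all $t$-derivatives; hence $u$ is continuous on $M\times[-\delta,\delta]$ with
\[
|u(x,t)|\le 2A_3\,e^{-\mu/2}e^{f(x)/2}(f(x)+1)^{n/4}e^{A_4d^2(x,p)}.
\]
The partial sums $u_N:=\sum_{j=0}^N(-\Delta)^ja\,t^j/j!$ satisfy, by a telescoping computation, $(\Delta+\partial_t)u_N=-(-\Delta)^{N+1}a\,t^N/N!$, whose $C^0(K)$ norm is $\le C_K(N+1)2^{-N}\to0$; hence $u$ is a distributional solution of $(\Delta+\partial_t)u=0$. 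Putting $s=-t$ turns this into the forward heat equation $(\Delta-\partial_s)\tilde u=0$ for $\tilde u(x,s):=u(x,-s)$, so parabolic regularity gives $u\in C^\infty$; the identity $(\Delta+\partial_t)u=0$ then holds classically and $u(x,0)=a(x)$ by construction, so $u$ solves \eqref{bhe}. Finally, by the well-known quadratic estimate for the potential function of a complete gradient shrinking Ricci soliton (see, e.g., \cite{[Cao]}), $f(x)\le\frac14(d(x,p)+c)^2$ for some $c>0$, so for each $\varepsilon>0$ one has $e^{f(x)/2}(f(x)+1)^{n/4}\le C_\varepsilon e^{(1/8+\varepsilon)d^2(x,p)}$; combined with the displayed bound this yields $|u(x,t)|\le C e^{(A_4+1/8+\varepsilon)d^2(x,p)}$ on $M\times(0,\delta)$, i.e. quadratic exponential growth.

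\emph{Main obstacle.} Most of the argument is formal bookkeeping. The two substantive points are: (i) in the necessity direction, recognizing that the proof of Theorem \ref{main} is insensitive to the length of the time interval, so that it may be run on the (possibly short) interval on which the time-reversed solution lives; and (ii) in the sufficiency direction, verifying that the weight $e^{f/2}(f+1)^{n/4}$ appearing in \eqref{aijie} is itself of quadratic exponential growth, which relies on the quadratic upper bound for $f$ that is particular to shrinkers. The potentially delicate term-by-term application of the second-order operator $\Delta$ to the series is sidestepped by passing through the distributional formulation and invoking parabolic regularity.
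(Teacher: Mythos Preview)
Your argument is correct and follows the same two-step strategy as the paper: necessity via Theorem \ref{main} applied to the time-reversed solution, sufficiency via the explicit power series. Two tactical differences are worth noting. First, you explicitly address the fact that the time-reversed solution lives only on a possibly short interval $[-T,0]$, whereas Theorem \ref{main} is stated for $[-2,0]$; the paper simply invokes the theorem without comment, so your remark that the proof rescales to any $T>0$ (with constants depending on $T$) fills a small gap. Second, for sufficiency the paper differentiates the power series $\sum_j\Delta^j a(x)\,t^j/j!$ term by term in $x$ to conclude $(\Delta-\partial_t)u=0$ directly, relying on locally uniform convergence of $\sum_j\Delta^{j+1}a\,t^j/j!$; your route through the partial sums $u_N$, the distributional identity, and parabolic hypoellipticity is slightly more careful, since \eqref{aijie} is a priori only a $C^0$ bound and the term-by-term application of $\Delta$ strictly requires control of second derivatives (which can be recovered from interior elliptic estimates, but this is not spelled out in the paper). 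Finally, you make explicit the use of the quadratic upper bound on $f$ to convert $e^{f/2}(f+1)^{n/4}$ into a factor of the form $e^{Cd^2(x,p)}$; the paper states this conclusion without isolating that step.
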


The rest of this paper is organized as follows: in Section \ref{sec2}, we recall some
properties of gradient shrinking Ricci solitons. In particular, we give a local mean
value type inequality on gradient shrinking Ricci solitons. In Section \ref{sec3},
adapting Dong-Zhang's proof strategy \cite{[DZ]}, we apply the mean value type
inequality of Section \ref{sec2} to prove Theorem \ref{main} and Corollary \ref{cor}.

\vspace{.1in}

\textbf{Acknowledgement}.
The author sincerely thanks Professor Qi S. Zhang for answering several questions
about the work \cite{[DZ]}. This work was partially supported by NSFC (11671141)
and NSFS (17ZR1412800).

\section{Some properties of shrinkers}\label{sec2}
In this section, we will present some basic propositions about complete non-compact
gradient shrinking Ricci solitons; these will be used in the proofs of our main results.

On an $n$-dimensional complete non-compact gradient shrinking Ricci soliton $(M,g,f)$
satisfying \eqref{Eq1}, \eqref{Eq2} and \eqref{Eq3}, from Chen's work (Proposition 2.2
in \cite{[Chen]}), we know that the scalar curvature is $\mathrm{R}\ge 0$.  Moreover, by
\cite{[PiRS]}, we know that the scalar curvature $\mathrm{R}$ must be strictly
positive, unless $(M,g,f)$ is the Gaussian shrinking Ricci soliton
$(\mathbb{R}^n,\delta_{ij}, |x|^2/4)$.

For any fixed point $p\in M$, by Theorem 1.1 of Cao-Zhou \cite{[CaZh]} (later refined by
Chow \emph{et al.} \cite{[Chowetc]}), we have
\[
\frac 14\left[\left(d(x,p)-2\sqrt{f(p)}-4n+\frac 43\right)_{+}\right]^2
\le f(x)\le \frac 14\left(d(x,p)+2\sqrt{f(p)}\right)^2
\]
for any $x\in M$, where $d(x,p)$ is the distance function from $p$ to $x$.
From this, $2\sqrt{f(x)}$ could be regarded as a distance-like function.
By the Gaussian shrinking Ricci soliton $(\mathbb{R}^n,\delta_{ij}, |x|^2/4)$,
the growth estimate of $f$ is sharp. Combining this estimate and
\eqref{Eq2}, for any point $p\in M$, we get that
\begin{equation}\label{upperR}
0\le \mathrm{R}(x)\le\frac 14\left(d(x,p)+2\sqrt{f(p)}\right)^2
\end{equation}
on $(M,g,f)$. It remains an interesting open question as to whether or not the scalar curvature
$\mathrm{R}$ is bounded from above by a uniform constant.

By Cao-Zhou \cite{[CaZh]} and Munteanu \cite{[Mun]}, the volume growth of a gradient
shrinking Ricci soliton can be regarded as an analogue of Bishop's theorem for
manifolds with non-negative Ricci curvature (see \cite{[MuWa2]}).
That is, there exists a constant $c(n)$ depending only on $n$ such that
\begin{equation}\label{upperV}
V_p(r)\le c(n)e^{f(p)}r^n
\end{equation}
for any $r>0$ and $p\in M$, where $V_p(r)$ denotes the volume of geodesic ball $B_p(r)$
with radius $r$ and center $p\in M$. From Haslhofer-M\"uller \cite{[HaMu]},
there exists a point $p_0\in M $ where $f$ attains its infimum such that $f(p_0)\leq n/2$.

In \cite{[LiWa]}, Li and Wang proved a local Sobolev inequality
on complete non-compact gradient shrinking Ricci solitons without any assumption.

\begin{lemma}\label{lem2}
Let $(M,g, f)$ be an $n$-dimensional complete gradient shrinking Ricci soliton
satisfying \eqref{Eq1}, \eqref{Eq2} and \eqref{Eq3}. Then, for each compactly supported
locally Lipschitz function $u(x)$ with support in $B_p(r)$, where $p\in M$ and $r>0$,
we have that
\begin{equation}\label{sobo}
\left(\int_{B_p(r)}u^{\frac{2n}{n-2}}dv\right)^{\frac{n-2}{n}}\le C(n)e^{-\frac{2\mu}{n}}\int_{B_p(r)}\left(4|\nabla u|^2+\mathrm{R} u^2\right) dv
\end{equation}
for some constant $C(n)$ depending only on $n$, where $\mu:=\mu(g,1)$ is the entropy functional
of Perelman, and $\mathrm{R}$ is the scalar curvature of $(M,g,f)$.
\end{lemma}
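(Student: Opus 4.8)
The plan is to follow Li and Wang \cite{[LiWa]} and recognize \eqref{sobo} as an instance of the chain \emph{Perelman entropy} $\Rightarrow$ \emph{logarithmic Sobolev inequality} $\Rightarrow$ \emph{Sobolev inequality}. Throughout write $\mathcal{E}(w,w):=\int_M\left(4|\nabla w|^2+\mathrm{R}\,w^2\right)dv$; since $\mathrm{R}\ge 0$ on $(M,g,f)$ this is a Dirichlet form with reference measure $dv$, its associated self-adjoint operator $-4\Delta+\mathrm{R}$ generates a sub-Markovian semigroup on $L^2(M,dv)$, and it suffices to prove $\|w\|_{L^{2n/(n-2)}(dv)}^2\le C(n)e^{-2\mu/n}\,\mathcal{E}(w,w)$ for every compactly supported locally Lipschitz $w$ on $M$; restricting to $w$ supported in $B_p(r)$ then gives \eqref{sobo}, since both sides are then unchanged. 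The argument has three steps.

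\textbf{Step 1: a scale-dependent log-Sobolev inequality from $\mu(g,\tau)$.} Substituting $w^2=(4\pi\tau)^{-n/2}e^{-f}$ into Perelman's $\mathcal{W}$-functional \cite{[Pe]}, and using the normalization \eqref{Eq2} precisely to turn $\mathrm{R}+|\nabla f|^2$ into the integrand of $\mathcal{E}$, one obtains the identity
\[
\mathcal{W}(g,f,\tau)=\tau\,\mathcal{E}(w,w)-\int_M w^2\ln w^2\,dv-\tfrac n2\ln(4\pi\tau)-n .
\]
Since the constraint $\int_M(4\pi\tau)^{-n/2}e^{-f}dv=1$ becomes $\int_M w^2\,dv=1$ and $\mu(g,\tau)=\inf_f\mathcal{W}(g,f,\tau)$ is finite (the case $\tau=1$ being \eqref{Eq3}), this yields, for every $\tau>0$ and every admissible $w$,
\[
\int_M w^2\ln w^2\,dv\le \tau\,\mathcal{E}(w,w)-\tfrac n2\ln\tau-\tfrac n2\ln(4\pi)-n-\mu(g,\tau).
\]

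\textbf{Step 2: $\mu(g,\tau)\ge\mu$ for $\tau\in(0,1]$.} Here the soliton structure enters. The shrinker generates the self-similar Ricci flow $g(t)=(1-t)\phi_t^*g$ for $t<1$; combining the scaling identity $\mu(cg,\sigma)=\mu(g,\sigma/c)$ with Perelman's monotonicity \cite{[Pe]} that $t\mapsto\mu(g(t),T-t)$ is nondecreasing on $t<\min(T,1)$, one gets, for $0<\tau_a<\tau_b<1$, upon taking $T=\tau_b$ and the auxiliary time $s=\frac{\tau_b-\tau_a}{1-\tau_a}\in(0,\tau_b)$,
\[
\mu(g,\tau_b)=\mu(g(0),T)\le\mu(g(s),T-s)=\mu\big((1-s)g,\tau_a(1-s)\big)=\mu(g,\tau_a).
\]
Hence $\tau\mapsto\mu(g,\tau)$ is nonincreasing on $(0,1]$, whence $\mu(g,\tau)\ge\mu(g,1)=\mu$ there.

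\textbf{Step 3: from the dimensional log-Sobolev inequality to Sobolev.} Feeding Step 2 into Step 1 gives, for all $\tau\in(0,1]$ and all admissible $w$,
\[
\int_M w^2\ln w^2\,dv\le \tau\,\mathcal{E}(w,w)+\tfrac n2\ln\!\Big(\tfrac{K}{\tau}\Big),\qquad K:=\tfrac1{4\pi e^2}\,e^{-2\mu/n},
\]
a logarithmic Sobolev inequality of dimension $n$. By the Davies--Gross argument (integrate the differential inequality for $t\mapsto\|e^{-t(-4\Delta+\mathrm{R})}\|_{L^1\to L^\infty}$, equivalently the Nash/Sobolev equivalence of Varopoulos and of Carlen--Kusuoka--Stroock; ultracontractivity for small times self-improves to all times via the semigroup property because $-4\Delta+\mathrm{R}\ge0$) this produces $\|e^{-t(-4\Delta+\mathrm{R})}\|_{L^1\to L^\infty}\le C(n)K\,t^{-n/2}$, equivalently $\|w\|_{L^{2n/(n-2)}(dv)}^2\le C(n)K\,\mathcal{E}(w,w)=C(n)e^{-2\mu/n}\,\mathcal{E}(w,w)$, valid for $n\ge3$. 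This is \eqref{sobo}.

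The main obstacle is Step 2: one must keep the self-similar flow, the scaling law for $\mu$, and the direction and time-range of Perelman's monotonicity exactly straight — a slip in the choice of the auxiliary time $s$, or in which endpoint plays which role, either reverses the inequality or destroys the uniformity of the bound as $\tau\to0$, and it is precisely this uniformity that upgrades a single defective log-Sobolev inequality to the dimensional one needed for Sobolev. Secondary technical points: the passage in Step 3 requires $n\ge3$ together with the standard density and truncation arguments on a complete noncompact manifold, and one should note that the potential term $\mathrm{R}\,w^2\ge0$ is harmless and is retained throughout rather than discarded.
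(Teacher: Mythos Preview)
Your proposal is correct and follows essentially the same route the paper attributes to Li--Wang \cite{[LiWa]}: the paper does not give its own proof but states that ``the proof of Lemma \ref{lem2} mainly depends on the Perelman's entropy functional and the Markov semigroup technique,'' which is exactly your chain $\mu\Rightarrow$ log-Sobolev $\Rightarrow$ Sobolev via Davies/ultracontractivity. One small slip: in Step~1 the normalization \eqref{Eq2} plays no role, since the $f$ in $\mathcal{W}(g,f,\tau)$ is a free test function, not the soliton potential; the rewriting $|\nabla f|^2\,w^2=4|\nabla w|^2$ is just the substitution $w^2=(4\pi\tau)^{-n/2}e^{-f}$, and what you actually need from the hypotheses is \eqref{Eq3} (finiteness of $\mu$) together with the soliton structure in Step~2.
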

The proof of Lemma \ref{lem2} mainly depends on Perelman's entropy functional and
the Markov semigroup technique; see \cite{[LiWa]}. If the scalar curvature $\mathrm{R}$
is bounded, \eqref{sobo} is similar to a classical Sobolev inequality on compact manifolds
\cite{[Zh0]}. In this paper, we will apply the Sobolev inequality \eqref{sobo} to obtain
a local mean value type inequality on shrinkers.
\begin{proposition}\label{prop}
Let $(M,g, f)$ be an $n$-dimensional complete gradient shrinking Ricci soliton
satisfying \eqref{Eq1}, \eqref{Eq2} and \eqref{Eq3}. Fix $0<m<\infty$. Then there exists a
positive constant $C(n,m)$ depending on $n$ and $m$, such that, for any
$s\in \mathbb{R}$, and for any $0<\delta<1$, and for any smooth nonnegative solution $v$ of
\[
\left(\Delta-\partial_t\right)v(x,t)\ge0
\]
in the parabolic cylinder $Q_r(p,s):=B_p(r)\times[s-r^2,s]$, where $p\in M$ and $0<r<2$,
we have
\begin{equation}\label{mean}
\sup_{Q_{\delta r}(p,s)}\{v^m\}
\leq \frac{C(n,m)(\mathrm{R}_{\mathrm{M}}+1)^{n/2}}{(1-\delta)^{2+n}\,e^{\mu}\,r^{2+n}}\int_{Q_r(p,s)}v^m \ dx dt,
\end{equation}
where $\mathrm{R}_{\mathrm{M}}:=\sup_{x\in B_p(r)}{\mathrm{R}(x)}$
and $\mu:=\mu(g,1)$ is Perelman's entropy functional.
\end{proposition}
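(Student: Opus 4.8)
The plan is to prove \eqref{mean} by a parabolic Moser iteration, feeding the local Sobolev inequality \eqref{sobo} into an energy (Caccioppoli) estimate at each step. Two features of \eqref{sobo} must be tracked carefully: the entropy factor $e^{-2\mu/n}$ and the zeroth order term $\mathrm{R}\,u^2$; the trick is to absorb $\mathrm{R}_{\mathrm{M}}$ into the Sobolev constant \emph{before} iterating, so it is raised only to the ``Sobolev power'' rather than the larger ``energy power''. Throughout I abbreviate $Q_\rho:=B_p(\rho)\times[s-\rho^2,s]$. As a first reduction, it suffices to establish, for every nonnegative subsolution $v$ of $(\Delta-\partial_t)v\ge 0$ on $Q_r$ and every $0<\delta<1$, the $L^2$--$L^\infty$ bound
\[
\sup_{Q_{\delta r}}v\ \le\ \frac{C(n)\,(\mathrm{R}_{\mathrm{M}}+1)^{n/4}}{(1-\delta)^{(n+2)/2}\,e^{\mu/2}\,r^{(n+2)/2}}\left(\int_{Q_r}v^2\,dv\,dt\right)^{\!1/2}.
\]
Indeed, for $m\ge 2$ the function $w:=v^{m/2}$ is again a nonnegative subsolution (since $\tau\mapsto\tau^{m/2}$ is convex and nondecreasing on $[0,\infty)$), so applying the displayed bound to $w$ and squaring gives \eqref{mean}. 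For $0<m<2$, apply the displayed bound on nested cylinders $Q_{\rho'}\subset Q_\rho$ with $\delta r\le\rho'<\rho\le r$, bound $\int_{Q_\rho}v^2\le(\sup_{Q_\rho}v)^{2-m}\int_{Q_r}v^m$, use Young's inequality to absorb $\tfrac12\sup_{Q_\rho}v$, and invoke the standard lemma ``if $h$ is bounded on $[\rho_0,\rho_1]$ and $h(\rho')\le\tfrac12 h(\rho)+A(\rho-\rho')^{-\beta}$ for $\rho_0\le\rho'<\rho\le\rho_1$, then $h(\rho_0)\le C(\beta)A(\rho_1-\rho_0)^{-\beta}$''; this yields \eqref{mean} with $C=C(n,m)$. (For $n=1,2$ one replaces $\tfrac{2n}{n-2}$ by any fixed $q>2$ throughout.)

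For the core bound, fix $\delta r\le\rho'<\rho\le r$ and an exponent $p\ge 2$, and take a cutoff $\psi(x,t)$ with $0\le\psi\le1$, $\psi\equiv1$ on $Q_{\rho'}$, $\operatorname{supp}\psi\subset Q_\rho$, $|\nabla\psi|\le C/(\rho-\rho')$, $|\partial_t\psi|\le C/(\rho-\rho')^2$. Testing $(\Delta-\partial_t)v\ge 0$ against $\psi^2 v^{p-1}$, integrating over $B_p(\rho)\times[s-\rho^2,\sigma]$, integrating by parts in space, writing $\psi^2 v^{p-1}\partial_t v=\tfrac1p\psi^2\partial_t(v^p)$ and integrating by parts in time, and absorbing the cross term by Cauchy--Schwarz, one gets the energy inequality
\[
\sup_{s-\rho^2\le\sigma\le s}\int_{B_p(\rho)}\psi^2 v^p\,dv\ +\ \int_{Q_\rho}\big|\nabla(\psi v^{p/2})\big|^2\,dv\,dt\ \le\ E,\qquad E:=\frac{C\,p}{(\rho-\rho')^2}\int_{Q_\rho}v^p\,dv\,dt .
\]
Now rewrite \eqref{sobo} on $B_p(\rho)\subset B_p(r)$, using $0\le\mathrm{R}\le\mathrm{R}_{\mathrm{M}}$ there, as $\|u\|_{L^{2n/(n-2)}}^2\le \widetilde C_S\int_{B_p(\rho)}(|\nabla u|^2+u^2)\,dv$ with $\widetilde C_S:=C(n)\,e^{-2\mu/n}\,(\mathrm{R}_{\mathrm{M}}+1)$. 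With $g:=\psi v^{p/2}$, apply this to $g(\cdot,t)$ for each $t$, integrate in $t$, and use $\int_{Q_\rho}g^2\le\rho^2\sup_t\int g^2\le\rho^2 E$, $\rho<2$, and the energy inequality to obtain $\int_{s-\rho^2}^{s}\|g(\cdot,t)\|_{L^{2n/(n-2)}}^2\,dt\le C\,\widetilde C_S\,E$. Combining this with the interpolation $\int_B g^{2+4/n}\le(\int_B g^{2n/(n-2)})^{(n-2)/n}(\int_B g^2)^{2/n}$ and the $\sup_t$ bound yields, with $\theta:=1+\tfrac2n$, the reverse Hölder estimate
\[
\left(\int_{Q_{\rho'}}v^{\theta p}\,dv\,dt\right)^{\!1/(\theta p)}\ \le\ \big(C\,\widetilde C_S\big)^{1/(\theta p)}\left[\frac{C\,p}{(\rho-\rho')^2}\right]^{1/p}\left(\int_{Q_\rho}v^{p}\,dv\,dt\right)^{\!1/p}.
\]

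The iteration then proceeds in the usual way: apply the reverse Hölder estimate with $p=p_k:=2\theta^k$ and $\rho=\rho_k:=\delta r+2^{-k}(1-\delta)r$ for $k=0,1,2,\dots$, so that $p_{k+1}=\theta p_k$, $\rho_0=r$, $\rho_k\downarrow\delta r$ and $\rho_k-\rho_{k+1}=2^{-(k+1)}(1-\delta)r$. Iterating and using $\sup_{Q_{\delta r}}v=\lim_{k\to\infty}\|v\|_{L^{p_k}(Q_{\rho_k})}$ gives
\[
\sup_{Q_{\delta r}}v\ \le\ \Bigg(\prod_{k\ge 0}\big(C\,\widetilde C_S\big)^{1/p_{k+1}}\Big[\tfrac{C p_k}{(\rho_k-\rho_{k+1})^2}\Big]^{1/p_k}\Bigg)\left(\int_{Q_r}v^2\,dv\,dt\right)^{\!1/2}.
\]
Since $\sum_{k\ge 0}p_{k+1}^{-1}=\tfrac n4$, $\sum_{k\ge 0}p_k^{-1}=\tfrac{n+2}{4}$ and $\sum_{k\ge 0}k\,p_k^{-1}<\infty$, the first product equals $C(n)\,\widetilde C_S^{\,n/4}=C(n)\,e^{-\mu/2}\,(\mathrm{R}_{\mathrm{M}}+1)^{n/4}$, and the second equals $C(n)\big((1-\delta)r\big)^{-(n+2)/2}$. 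This is precisely the $L^2$--$L^\infty$ bound, which by the first paragraph completes the proof.

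The one genuinely delicate point is the constant bookkeeping in the last two steps: one must verify that the entropy factor accumulates to exactly $e^{-\mu/2}$ and the scalar-curvature contribution to exactly $(\mathrm{R}_{\mathrm{M}}+1)^{n/4}$ (hence to $e^{-\mu}$ and $(\mathrm{R}_{\mathrm{M}}+1)^{n/2}$ after the final squaring), which is exactly what dictates folding $\mathrm{R}_{\mathrm{M}}$ into the Sobolev constant $\widetilde C_S$ rather than carrying it as a separate lower-order term, together with the specific geometric choices of $p_k$ and $\rho_k$ and the use of the hypothesis $r<2$. The remaining ingredients — the Caccioppoli computation and the elementary iteration lemma — are routine.
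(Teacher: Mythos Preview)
Your proof is correct and follows essentially the same approach as the paper: a parabolic Moser iteration driven by the Caccioppoli estimate and the local Sobolev inequality \eqref{sobo}, with the scalar curvature absorbed into the Sobolev constant $\widetilde C_S=C(n)e^{-2\mu/n}(\mathrm{R}_{\mathrm{M}}+1)$ before iterating so that it emerges with the exponent $n/4$ (and the entropy factor with exponent $-\mu/2$), the hypothesis $r<2$ being used precisely to bound the zeroth-order term $\int g^2$ by the energy. The only cosmetic differences are that the paper tests with $\psi^2 u$ and then applies the resulting reverse H\"older step to $u^m$, and for $0<m<2$ it runs the explicit iteration $\sigma_{i+1}=\sigma_i+(1-\sigma_i)/4$ rather than invoking the abstract absorption lemma after Young's inequality---both equivalent to what you do.
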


\begin{remark}
It should be noted that our local mean value type inequality \emph{only} holds for
a local geodesic ball (here we choose the radius $0<r<2$), and does not hold
for a geodesic ball of any radius.
\end{remark}

The point $(p,s)$ and the radius $r$ in Proposition \ref{prop} is customarily called the
vertex and the size of parabolic cylinder $Q_r(p,s)$, respectively. Compared with the
classical mean value type inequality of manifolds, there seems to be a lack of a volume
factor $V_p(r)$ in \eqref{mean}. However if the factor $r^n$ is regarded as
$V_p(r)$, this inequality is very similar to the classical case.

\begin{proof}[Proof of Proposition \ref{prop}]
Following the argument of \cite{[WuWu]}, the proof technique used here is the delicate
Moser iteration applied to local Sobolev inequality \eqref{sobo}. It is emphasized
that the explicit coefficients of the mean value inequality in terms of the
Sobolev constants in \eqref{sobo} should be carefully examined.

We first prove \eqref{mean} for $m=2$. Given a nonnegative smooth solution $u$ of
$(\Delta-\partial_t)v\ge 0$ for any nonnegative function $\phi\in C^{\infty}_0(B)$,
where $B:=B_p(r)$, $p\in M$ and $r>0$, we have that
\[
\int_B(\phi u_t+\nabla\phi\nabla u)dv\le 0.
\]
Set $\phi=\psi^2u$, $\psi\in C^{\infty}_0(B)$. Then
\begin{equation*}
\begin{aligned}
\int_B(\psi^2 uu_t+\psi^2|\nabla u|^2)dv&\le 2\left|\int_B u\psi \nabla u\nabla\psi dv\right|\\
&\le 2\int_B|\nabla\psi|^2u^2dv+\frac 12\int_B\psi^2|\nabla u|^2dv,
\end{aligned}
\end{equation*}
so we get that
\[
\int_B(2\psi^2 uu_t+|\nabla(\psi u)|^2)dv\le 4\,\|\nabla\psi\|^2_{\infty} \int_{\mathrm{supp}(\psi)} u^2dv.
\]
Multiplying a smooth function $\lambda(t)$, which will be determined later, from the above
inequality, we have that
\begin{equation}
\begin{aligned}\label{basinequ}
\partial_t\left(\int_B(\lambda\psi u)^2dv\right)+&\lambda^2\int_B|\nabla(\psi u)|^2dv\\
&\leq C\lambda\Big(\lambda\|\nabla \psi\|^2_{\infty}+|\lambda'|\sup\psi^2\Big)\int_{\mathrm{supp}(\psi)} u^2dv,
\end{aligned}
\end{equation}
where $C$ is finitely constant, though this may change from line to line in the ensuing computations.

We choose $\psi$ and $\lambda$ such that, for $0<\sigma'<\sigma<1$, $\kappa=\sigma-\sigma'$,
\begin{enumerate}
\item
$0\leq\psi\leq 1$, $\mathrm{supp}(\psi)\subset\sigma B$, $\psi=1$ in $\sigma' B$ and
$|\nabla\psi|\le 2(\kappa r)^{-1}$, where $\sigma B:=B_p(\sigma r)$;
\item
$0\leq\lambda\leq 1$, $\lambda=0$ in $(-\infty,s-\sigma r^2)$,  $\lambda=1$ in $(s-\sigma' r^2,+\infty)$, and
$|\lambda'(t)|\le 2(\kappa r)^{-2}$.
\end{enumerate}
Set $I_\sigma:=[s-\sigma r^2,s]$. For any $t\in I_{\sigma'}$, integrating \eqref{basinequ} over
$[s-r^2, t]$,
\begin{equation}\label{integso}
\sup_{I_{\sigma'}}\left\{\int_B\psi u^2dv\right\}+\int_{B\times I_{\sigma'}}|\nabla(\psi u)|^2dv dt
\le C(\kappa r)^{-2}\int_{{\sigma B}\times I_{\sigma}} u^2dv dt,
\end{equation}
On the other hand, by H\"older's inequality and Lemma \ref{lem2}, we have that
\begin{equation}
\begin{aligned}\label{space-timeineq}
\int_{\overline{B}}\varphi^{2(1+2/n)}dv
&\le\left(\int_{\overline{B}}\varphi^2dv\right)^{2/n}
\left(\int_{\overline{B}}|\varphi|^{\frac{2n}{n-2}}dv\right)^{\frac{n-2}{n}}\\
&\le \left(\int_{\overline{B}}\varphi^2dv\right)^{2/n}
\left[C(n)e^{-\frac{2\mu}{n}}\int_{\overline{B}}(4|\nabla \varphi|^2+\mathrm{R}\,\varphi^2)dv\right]
\end{aligned}
\end{equation}
for all $\varphi\in C^{\infty}_0(\overline{B})$. Combining \eqref{integso} and \eqref{space-timeineq}, we
finally get that
\[
\int_{{\sigma' B}\times I_{\sigma'}}u^{2\theta}dv dt
\leq E(B)\left(C(\kappa r)^{-2}\int_{{\sigma B}\times I_{\sigma}} u^2dv dt\right)^\theta,
\]
with $\theta=1+2/n$, where $E(B):=C(n)e^{-\frac{2\mu}{n}}\left(\mathrm{R}_{\mathrm{M}}+1\right)$
and $\mathrm{R}_{\mathrm{M}}:=\sup_{x\in B_p(r)}{\mathrm{R}(x)}$. We would like to point out that we have used the condition $0<r<2$ in the above inequality.

For any $m\geq 1$, $u^m$ is also a nonnegative solution of $(\Delta-\partial_t)u\ge 0$.
Hence the above inequality indeed implies that
\begin{equation}\label{intds2}
\int_{{\sigma' B}\times I_{\sigma'}}u^{2m\theta}dv dt
\leq E(B)\left(C(\kappa r)^{-2}\int_{{\sigma B}\times I_{\sigma}} u^{2m}dv dt\right)^\theta
\end{equation}
for $m\geq1$.

Let $\kappa_i=(1-\delta)2^{-i}$, which satisfies $\Sigma^{\infty}_1\kappa_i=1-\delta$.
Let $\sigma_0=1$ and $\sigma_{i+1}=\sigma_i-\kappa_i=1-\Sigma^i_1\kappa_j$.
Applying \eqref{intds2} for $m=\theta^i$, $\sigma=\sigma_i$, $\sigma'=\sigma_{i+1}$,
we have that
\[
\int_{{\sigma_{i+1} B}\times I_{\sigma_{i+1}}}u^{2\theta^{i+1}}dv dt
\leq E(B)\left\{\frac{C^{i+1}}{\left[(1-\delta)r\right]^2}\int_{{\sigma_i B}\times I_{\sigma_i}} u^{2\theta^i}dv dt\right\}^\theta.
\]
Therefore,
\begin{equation*}
\begin{aligned}
\left(\int_{{\sigma_{i+1} B}\times I_{\sigma_{i+1}}}u^{2\theta^{i+1}}dv dt\right)^{\theta^{-i-1}}
\le \frac{C^{\Sigma j\theta^{1-j}}E(B)^{\Sigma\theta^{-j}}}{\left[(1-\delta)r\right]^{2\Sigma\theta^{1-j}}}
\int_{Q_r(p,s)} u^2dv dt,
\end{aligned}
\end{equation*}
where $\Sigma$ denotes the summations from $1$ to $i+1$. Letting $i\to \infty$,
\begin{equation}\label{prmi}
\sup_{{\delta B}\times I_\delta}\{u^2\}\le \frac{C E(B)^{n/2}}{[(1-\delta)r]^{2+n}}\|u\|^2_{2,Q_r(p,s)},
\end{equation}
which clearly implies \eqref{mean} when $m=2$, since $I_{\delta^2}\subset I_{\delta}$.

The case $m>2$ then follows by the case $m=2$, because if $u$ is a nonnegative
solution of $(\Delta-\partial_t)v\ge 0$, then
$u^m$, $m\geq1$, is also a nonnegative solution of $(\Delta-\partial_t)v\ge 0$.
All in all, we do, in fact, prove \eqref{mean} when $m\geq 2$.

When $0<m<2$, we will apply \eqref{prmi} to prove \eqref{mean} by a different iterative
argument. Let $\sigma\in (0,1)$ and $\rho=\sigma+(1-\sigma)/4$. Then \eqref{prmi} implies
that
\[
\sup_{{\sigma B}\times I_\sigma}\{u\}\le \frac{F(B)}{(1-\sigma)^{1+n/2}}\|u\|_{2,{\rho B}\times I_\rho},
\]
where $F(B):=C(n)e^{-\mu/2}\left(\mathrm{R}_{\mathrm{M}}+1\right)^{n/4}r^{-1-n/2}$.
Using
\[
\|u\|_{2,Q}\le \|u\|^{1-m/2}_{\infty,Q}\cdot\|u\|^{m/2}_{m,Q},\quad 0<m<2
\]
for any parabolic cylinder $Q$, we then have that
\begin{equation}\label{moserit}
\|u\|_{\infty,{\sigma B}\times I_\sigma}\le \frac{G(B)}{(1-\sigma)^{1+n/2}}\|u\|^{1-m/2}_{\infty,{\rho B}\times I_\rho},
\end{equation}
where $G(B):=F(B)\|u\|^{m/2}_{m,Q_r(p,s)}$.

Now fix $\delta\in (0,1)$ and let $\sigma_0=\delta$ and $\sigma_{i+1}=\sigma_i+(1-\sigma_i)/4$,
which satisfies that $1-\sigma_i=(3/4)^i(1-\delta)$.
Applying \eqref{moserit} to $\sigma=\sigma_i$ and $\rho=\sigma_{i+1}$ for each $i$,
we have that
\[
\|u\|_{\infty,{\sigma_i B}\times I_{\sigma_i}}\le (4/3)^{i(1+n/2)}
\frac{G(B)}{(1-\delta)^{1+n/2}}\,\|u\|^{1-m/2}_{\infty,{\sigma_{i+1} B}\times I_{\sigma_{i+1}}}.
\]
Therefore, for any $i$,
\[
\|u\|_{\infty,{\delta B}\times I_\delta}\le (4/3)^{(1+n/2)\Sigma j(1-m/2)^j}
\left[\frac{G(B)}{(1-\delta)^{1+n/2}}\right]^{\Sigma(1-m/2)^j}
\|u\|^{(1-m/2)^i}_{\infty,{\sigma_i B}\times I_{\sigma_i}},
\]
where $\Sigma$ denotes the summations from $0$ to $i-1$. Letting $i\to \infty$,
\[
\|u\|_{\infty,{\delta B}\times I_\delta}\le(4/3)^{\frac{2-m}{m^2}(2+n)}
\left[\frac{G(B)}{(1-\delta)^{1+n/2}}\right]^{2/m},
\]
which implies \eqref{mean} for $0<m<2$, by $I_{\delta^2}\subset I_{\delta}$ and the definition of $G(B)$.
\end{proof}

\section{Proof of results}\label{sec3}
In this section, adapting the argument of Dong-Zhang \cite{[DZ]},
we will apply the preceding propositions of shrinkers in Section \ref{sec2} to prove Theorem
\ref{main} and Corollary \ref{cor}. We first prove Theorem \ref{main}.

\begin{proof}[Proof of Theorem \ref{main}]
Without loss of generality, we may assume that $A_1=1$, because the heat equation is linear.
To prove the theorem, it suffices to confirm the result at space-time point $(x,0)$
for any $x \in M$.

Since $u$ is a given smooth solution to the heat equation, $u^2$ is a nonnegative
subsolution to the heat equation. Given a point $x_0 \in M$ and a positive
integer $k$, by letting $s=0$, $r=1/\sqrt k$, $m=1$,
$\delta=1/2$ in the mean value type inequality of Proposition \ref{prop}, we have
that

\begin{equation}
\begin{aligned}\label{genemeanieq}
\sup_{Q_{1/(2 \sqrt{k})} (x_0,0)} u^2 &\le \frac{C_1(n)
\left[\frac 14\left(\frac{1}{\sqrt k}+2\sqrt{f(x_0)}\right)^2+1\right]^{n/2}}{e^{\mu}(1/\sqrt k)^{2+n}}
\int_{Q_{1/\sqrt{k}}(x_0,0)} u^2(x,t) dv dt\\
&\le C_2(n)e^{-\mu}k^{n/2+1}(f(x_0)+1)^{n/2}\int_{Q_{1/\sqrt{k}}(x_0,0)} u^2(x,t) dv dt,
\end{aligned}
\end{equation}
where constants $C_1(n)$ and $C_2(n)$ both depend only on $n$. Here, in the
first inequality above, we used the estimate
\[
\sup_{B_{x_0}(1/\sqrt k)}\mathrm{R}\le\frac 14\left(\frac{1}{\sqrt k}+2\sqrt{f(x_0)}\right)^2,
\]
and in the second inequality we only used the simple fact that the size of the cubes
is less than one.

Since $\partial^k_t u$ is also a solution to the heat equation, then substituting
this into \eqref{genemeanieq} gives
\begin{equation}\label{mviqdkt}
\sup_{Q_{1/(2 \sqrt{k})} (x_0,0)} (\partial^k_t u)^2
\le C_2(n)e^{-\mu}k^{n/2+1}(f(x_0)+1)^{n/2}\int_{Q_{1/\sqrt{k}}(x_0,0)}(\partial^k_t u)^2(x,t) dv dt.
\end{equation}

In that follows, following we will bound the right hand side of \eqref{mviqdkt}.
For integers $j=1, 2, \ldots, k,$ consider the domains
\[
\Omega^1_{j}=B_{x_0}\left(\tfrac{j}{\sqrt{k}}\right)\times\left[-\tfrac{j}{k},0\right],\quad
\Omega^2_j=B_{x_0}\left(\tfrac{j+0.5}{\sqrt{k}}\right)\times\left[-\tfrac{j+0.5}{k},0\right].
\]
Obviously, $\Omega^1_j \subset \Omega^2_j \subset \Omega^1_{j+1}$.
Let $\psi^{(1)}_j$ be a standard Lipschitz cutoff function supported in
\[
B_{x_0}\left(\tfrac{j+0.5}{\sqrt{k}}\right)\times\left[-\tfrac{j+0.5}{k},\tfrac{j+0.5}{k}\right]
\]
such that
\[
\psi^{(1)}_j=1\,\,\, \mathrm{in}\,\,\, \Omega^1_j
\qquad \mathrm{and}\qquad
\left|\nabla\psi^{(1)}_j\right|^2 + \left|\partial_t \psi^{(1)}_j\right|\le C k,
\]
where $C$ is a universal constant (this includes the following constants $C$)
that may be changed line by line.

For the above cutoff function $\psi=\psi^{(1)}_j$, multiplying $(u_t)^2=u_t\Delta u$
by $\psi^2$, integrating it over $\Omega^2_j$, and using integration by parts,
we compute that
\begin{equation*}
\begin{aligned}
\int_{\Omega^2_j}(u_t)^2\psi^2dx dt&=\int_{\Omega^2_j}u_t\Delta u\psi^2dvdt\\
&=-\int_{\Omega^2_j} ((\nabla u)_t \nabla u) \psi^2dvdt
- \int_{\Omega^2_j} u_t \nabla u \nabla \psi^2 dvdt\\
&=-\frac 12\int_{\Omega^2_j}(|\nabla u |^2)_t\psi^2dv dt-2\int_{\Omega^2_j}u_t\psi\nabla u\nabla\psi dvdt\\
&\le\frac 12\int_{\Omega^2_j}|\nabla u|^2(\psi^2)_tdvdt+\frac 12\int_{\Omega^2_j}(u_t)^2 \psi^2 dvdt+2\int_{\Omega^2_j}
 |\nabla u|^2|\nabla \psi|^2 dvdt\\
&\le Ck\int_{\Omega^2_j}|\nabla u|^2dvdt+\frac 12\int_{\Omega^2_j}(u_t)^2 \psi^2 dvdt,
\end{aligned}
\end{equation*}
where we used the property of cutoff function $\psi$ in the fifth line.
Using $\Omega^1_j \subset \Omega^2_j$ and the property of $\psi$, the above inequality implies that
\begin{equation}\label{omg1j<}
\int_{\Omega^1_j}(u_t)^2 dvdt\le Ck\int_{\Omega^2_j}|\nabla u |^2dv dt.
\end{equation}

Let $\psi^{(2)}_j$ also be a standard Lipschitz cutoff function supported in
\[
B_{x_0}\left(\tfrac{j+1}{\sqrt{k}}\right)\times\left[-\tfrac{j+1}{k},\tfrac{j+1}{k}\right]
\]
such that $\psi^{(2)}_j=1$ in $\Omega^2_j$ and
$\left|\nabla\psi^{(2)}_j\right|^2+\left|\partial_t\psi^{(2)}_j\right|\le Ck$.
Then we can apply the standard Caccioppoli inequality (energy estimate)
between the cubes $\Omega^2_j$ and $\Omega^1_{j+1}$ to obtain that
\begin{equation}\label{energyest}
\int_{\Omega^2_j}|\nabla u|^2dvdt\le C k\int_{\Omega^1_{j+1}}u^2dvdt.
\end{equation}
Indeed, for the cutoff function $\varphi=\psi^{(2)}_j$, multiplying $uu_t=u\Delta u$
by $\varphi^2$, integrating it over $\Omega^1_{j+1}$, and using integration by parts,
we have that
\begin{equation*}
\begin{aligned}
\frac 12\int_{\Omega^1_{j+1}}\partial_t(u^2\varphi^2)dvdt
-\int_{\Omega^1_{j+1}}&\varphi\varphi_t u^2dvdt\\
&\quad=\int_{\Omega^1_{j+1}}uu_t\varphi^2dv dt\\
&\quad=\int_{\Omega^1_{j+1}}u\Delta u\varphi^2dvdt\\
&\quad=-\int_{\Omega^1_{j+1}}|\nabla u|^2\varphi^2dvdt-2\int_{\Omega^1_{j+1}}u \nabla u\varphi \nabla\varphi dvdt.
\end{aligned}
\end{equation*}
Observing that the first term of left hand side is
\[
\frac 12\int_{\Omega^1_{j+1}}\partial_t(u^2\varphi^2)dvdt
=\frac 12\int_{B_{x_0}\left(\tfrac{j+1}{\sqrt{k}}\right)}u^2(x,0)dv\ge 0,
\]
we conclude that
\begin{equation*}
\begin{aligned}
\int_{\Omega^1_{j+1}}|\nabla u|^2\varphi^2dvdt&\le
\int_{\Omega^1_{j+1}}\varphi\varphi_t u^2dvdt-2\int_{\Omega^1_{j+1}}u\nabla u\varphi\nabla\varphi dvdt\\
&\le\int_{\Omega^1_{j+1}}\varphi\varphi_t u^2dvdt+\frac 12\int_{\Omega^1_{j+1}}|\nabla u|^2\varphi^2dvdt+2\int_{\Omega^1_{j+1}}u^2|\nabla\varphi|^2 dvdt,
\end{aligned}
\end{equation*}
where we have used Young's inequality in the second inequality above. Therefore,
\[
\int_{\Omega^1_{j+1}}|\nabla u|^2\varphi^2dvdt
\le2\int_{\Omega^1_{j+1}}\varphi\varphi_t u^2dvdt+4\int_{\Omega^1_{j+1}}u^2|\nabla\varphi|^2 dvdt.
\]
Then \eqref{energyest} follows by $\Omega^2_j \subset \Omega^1_{j+1}$ and the property of cut-off function $\varphi$.

Combining \eqref{energyest}  and \eqref{omg1j<} yields that
\[
\int_{\Omega^1_j}(u_t)^2 dvdt\le C k^2\int_{\Omega^1_{j+1}}u^2dvdt.
\]
Since $\partial^j_t u$ is also a solution of the heat equation, we can replace $u$ in
the above inequality by $\partial^j_t u$ to deduce, by induction, that
\[
\int_{\Omega^1_1}(\partial^k_t u)^2dvdt\le C^k k^{2k}\int_{\Omega^1_k}u^2dvdt.
\]

Noticing that ${\Omega}^1_1= Q_{1/\sqrt{k}}(x_0,0)$ and
${\Omega}^1_k=B_{x_0}(\sqrt{k})\times[-1,0]$, we substitute
the above inequality into \eqref{mviqdkt} to get that
\begin{equation}\label{pingh}
\sup_{Q_{1/(2 \sqrt{k})}(x_0,0)}(\partial^k_t u)^2(x,t)\le C_2(n)e^{-\mu}k^{n/2+1}
(f(x_0)+1)^{n/2}C^kk^{2 k}\int_{B_{x_0}(\sqrt{k})\times[-1, 0]}u^2dvdt.
\end{equation}
Using quadratic exponential growth condition \eqref{expg} and the volume growth of
shrinker \eqref{upperV}, \eqref{pingh} can be further simplified as
\[
\sup_{Q_{1/(2\sqrt{k})}(x_0,0)}\left|\partial^k_tu(x,t)\right|\le C_2(n)e^{-\mu/2}k^{n/4+1/2}
(f(x_0)+1)^{n/4}C^{k/2}k^ke^{f(x_0)/2}k^{n/4}e^{A_2d^2(\xi,p)}
\]
for some point $\xi\in B_{x_0}(\sqrt{k})$ and for all integers $k \ge 1$.
By the triangle inequality,
\[
d(\xi,p)\le d(x_0, p)+d(x_0,\xi)\le d(x_0,p)+k^{1/2},
\]
so
\[
d^2(\xi,p)\le (d(x_0,p)+k^{1/2})^2\le 2d^2(x_0,p)+2k.
\]
Therefore,
\begin{equation}
\begin{aligned}\label{djtu}
\left|\partial^k_t u(x,t)\right|&\le C_2(n)e^{-\mu/2}e^{f(x_0)/2}(f(x_0)+1)^{n/4}
k^{n/2+1/2} C^{k/2}k^k e^{2A_2 k}\, e^{2A_2 d^2(x_0,p)}\\
&\le e^{-\mu/2}e^{f(x_0)/2}(f(x_0)+1)^{n/4} A_3^{k+1}\,k^k\, e^{2A_2 d^2(x_0,p)}
\end{aligned}
\end{equation}
for all $(x,t)\in Q_{1/(2\sqrt{k})}(x_0,0)$, and
for all integers $k\ge1$, where $A_3$ is a positive constant depending only on
$n$, $C$ and $A_2$.

Now fix a real number $R \ge 1$. For any point $x\in B_p(R)$, we choose a
positive integer $j$  and $t\in [-\delta, 0]$
for some small $\delta>0$. By Taylor's theorem,
\begin{equation}\label{jtaylor}
u(x,t)-\sum^{j-1}_{i=0}\partial^i_tu(x,0)\frac{t^i}{i!}=\frac{t^j}{j!}\partial^j_su(x,s),
\end{equation}
where $s=s(x,t,j)\in[t,0]$.
Using \eqref{djtu}, we know that, for sufficiently small $\delta>0$, which depends
on $n$, $A_2$, $A_3$, $\mu$ and $f$, the
right hand side of \eqref{jtaylor} converges to $0$ uniformly
for $x \in B_p(R)$ as $j \to \infty$. Hence
\[
u(x,t)=\sum^{\infty}_{j=0}\partial^j_t u(x,0)\frac{t^j}{j!}.
\]
Thus $u(x,t)$ is analytic in time $t$ with radius $\delta$. Set
$a_j= a_j(x)=\partial^j_t u(x,0)$. By \eqref{djtu} again, we have
that
\[
\partial_tu(x,t)=\sum^{\infty}_{j=0}a_{j+1}(x)\frac{t^j}{j!}\quad\mathrm{and}\quad
\Delta u(x,t)=\sum^{\infty}_{j=0}\Delta a_j(x)\frac{ t^j}{j!},
\]
where both series converge uniformly for $(x,t) \in B_p(R)\times[-\delta, 0]$
for any fixed $R>0$. Since $u(x,t)$ is a solution of the heat equation,
this implies that
\[
\Delta a_j(x)=a_{j+1}(x),
\]
with
\[
|a_j(x)|\le A_3^{j+1}e^{-\mu/2}e^{f(x)/2}(f(x)+1)^{n/4}j^je^{2A_2d^2(x,p)},
\]
where $A_3$ is a positive constant depending only on $n$ and $A_2$.
\end{proof}

In the end, we apply Theorem \ref{main} to prove Corollary \ref{cor}.

\begin{proof}[Proof of Corollary \ref{cor}]
Assume that $u(x,t)$ is a smooth solution to the Cauchy problem of the backward
heat equation \eqref{bhe} with quadratic exponential growth. Then $u(x,-t)$ is
also a smooth solution of the heat equation with quadratic exponential growth.
By Theorem \ref{main}, we have that
\[
u(x,-t)=\sum^\infty_{j=0}\Delta^ja(x)\frac{(-t)^j}{j!}.
\]
Then \eqref{aijie} follows by letting $\Delta^ja(x)=a_j(x)$ in the theorem.

On the other hand, we assume that \eqref{aijie} holds. We then claim that
\[
u(x,t)=\sum^\infty_{j=0}\Delta^ja(x)\frac{t^j}{j!}
\]
is a smooth solution to the heat equation for $t\in[-\delta,0]$ with some constant
$\delta>0$ sufficiently small. Indeed, \eqref{aijie} guarantees that the above series
and the two series
\[
 \sum^\infty_{j=0}\Delta^{j+1}a(x)\frac{t^j}{j!}\quad\text{and}\quad
\sum^\infty_{j=0}\Delta^{j}a(x)\frac{\partial_t t^j}{j!}
\]
all converge absolutely and uniformly in $[-\delta, 0]\times B_p(R)$
for any fixed $R>0$. Hence $(\Delta-\partial_t)u(x,t)=0$,
and the claim follows. Moreover, we observe that
\begin{equation*}
\begin{aligned}
|u(x,t)|&\le\sum^\infty_{j=0}\left|\Delta^j a(x)\right|\frac{|t|^j}{j!}\\
&\le e^{-\mu/2}e^{f(x)/2}(f(x)+1)^{n/4}A_3
e^{A_4 d^2(x,p)} \sum^\infty_{j=0} \frac{(A_3\,j|t|)^j}{j!}\\
&\leq e^{-\mu/2}e^{f(x)/2}(f(x)+1)^{n/4}A_3A_5e^{A_4 d^2(x,p)},
\end{aligned}
\end{equation*}
provided that $t\in[-\delta,0]$ with some sufficiently small constant $\delta>0$,
where we used the fact that the series $\sum^\infty_{j=0} \tfrac{(A_3\,j|t|)^j}{j!}$
converges in $[-\delta,0]$ and its summation is no more than some constant $A_5>0$;
that is, $u(x,t)$ has quadratic exponential growth. Hence, $u(x, -t)$ is a solution
to the Cauchy problem of the backward heat equation \eqref{bhe} of quadratic exponential
growth.
\end{proof}

\bibliographystyle{amsplain}

\end{document}